\newtheorem{thm}{Theorem}[section]
\newtheorem{defn}[thm]{Definition}
\newtheorem{lemma}[thm]{Lemma}
\newtheorem{conj}[thm]{Conjecture}
\newtheorem{remark}[thm]{Remark}
\newcommand{\g}{{\mathfrak{g}}}
\renewcommand{\sl}{{\mathfrak{sl}}}
\newcommand{\ch}{{\rm ch}}
\newcommand{\C}{{\mathbb C}}
\newcommand{\Z}{{\mathbb Z}}
\newcommand{\N}{{\mathbb N}}
\newcommand{\Hom}{{\rm Hom}}
\newcommand{\bm}{{\mathbf m}}
\newcommand{\bn}{{\mathbf n}}
\newcommand{\bx}{{\mathbf x}}
\newcommand{\by}{{\mathbf y}}
\newcommand{\al}{{\alpha}}
\newcommand{\tQ}{{\widetilde{Q}}}
\newcommand{\sgn}{{\rm sgn}}
\numberwithin{equation}{section}
\begin{document}
\title{$Q$-systems as cluster algebras}
\author{Rinat Kedem}
\address{Department of Mathematics, University of Illinois, 1409 West
  Green Street, Urbana, IL 61821. e-mail: rinat@uiuc.edu}
\date{\today}
\begin{abstract}
$Q$-systems first appeared in the analysis of the Bethe equations for
  the $XXX$-model and generalized Heisenberg spin chains
  \cite{KRqsys}. Such systems are known to exist for any simple Lie
  algebra and many other Kac-Moody algebras.  We formulate the
  $Q$-system associated with any simple, simply-laced Lie algebras
  $\g$ in the language of cluster algebras \cite{FZcluster}, and
  discuss the relation of the polynomiality property of the solutions
  of the $Q$-system in the initial variables, which follows from the
  representation-theoretical interpretation, to the Laurent
  phenomenon in cluster algebras \cite{FZlaurent}.
\end{abstract}

\maketitle
\section{Introduction}
For any simple Lie algebra, and for many other Kac-Moody algebras,
there exists an associated system of equations known as a $Q$-system. These equations can be regarded as 
a recursion relation in $i$ for the variables
$\{ Q_{\al,i}\ : \ \al\in I_r, i\in \Z_+\}$, $I_r=\{1,...,r\}$ where $r$ is
the rank of the algebra $\g$. 

Solutions to the $Q$-system describe characters of special representations of the quantum affine algebra $U_q(\widehat{\g})$ or of the Yangian $Y(\g)$, as well as the classical limit described by Chari \cite{Chari}.

The $Q$-systems can be considered as a type of discrete dynamical
system, and for the root system of type $A$, it is  a specialization of the discrete Hirota equations. The $Q$-system first
appeared in the analysis of Bethe Bethe ansatz of the generalized
inhomogeneous Heisenberg spin chain \cite{KRqsys} in the thermodynamic limit. This system is instrumental to the solution of the  Kirillov-Reshetikhin (KR) conjectures about
completeness of the Bethe ansatz solutions and about the structure of
finite-dimensional representations of the
quantum algebra \cite{Nakajima,Hernandez}.

The $Q$-systems have the remarkable property that their solutions are
polynomials, given appropriate initial data.  This
fact was the essential ingredient in the recent proof \cite{DFK} of
the combinatorial Kirillov-Reshetikhin conjectures, which are the completeness
conjectures for the generalized Heisenberg spin chains.

On the other hand, the concept of cluster algebras \cite{FZcluster} is
particularly well-suited to describing precisely this sort of system,
and it is clear from the definition of cluster algebras that the two
must be connected (this was also remarked in \cite{Her07}). In fact,
$Y$-systems, which are closely connected to $Q$-systems \cite{KNS},
have previously been studied using the cluster algebra formalism
\cite{FZysys}.

In this note, we explain how to formulate this relation
precisely in the case of simply-laced Lie algebras. The result is the
description of the $Q$-systems as a subgraph of a cluster algebra
tree which has particularly nice properties.

Section 2 is a short exposition of the background of the $Q$-system. In Section 3, we recall some basic facts about cluster algebras, and show that the $Q$-system defines a particular case of a cluster algebra. We also discuss the
relation of the polynomiality property, which can be proven
using purely representation-theoretical arguments
\cite{Nakajima,Hernandez}, to the remarkable property of cluster
algebras known as the Laurent phenomenon \cite{FZlaurent}.

\section{Background}
\subsection{The generalized Heisenberg spin chains and the
  Kirillov-Reshetikhin conjectures} 
  
 Below we make reference to the Bethe equations, but since we will use only certain combinatorial constructions derived from these equations rather than the equations themselves, we refer the reader to the literature for further information on them \cite{R,KR}. 
  
  The generalized inhomogeneous
Heisenberg model associated with the Yangian $Y(\g)$ of a simple Lie
algebra $\g$ was formulated in \cite{KRqsys} for the classical Lie
algebras. It can similarly be formulated for the quantum affine algebra
$U_q(\widehat{\g})$. This latter generalization corresponds to the
deformation of the XXX model to the XXZ model.

Let $R$ be either the rational $R$-matrix corresponding to the Yangian, or the  trigonometric $R$-matrix,
corresponding to the  quantum affine algebra. The transfer
matrix of the model is the trace over the auxilliary space $V_0$ of a
product of $R$-matrices:
\begin{equation}\label{transfer}
T_{V_0}(z_1,...,z_N) = {\rm Trace}_{V_0} R_{V_1,V_0}(z_1)\cdots
R_{V_N,V_0}(z_N).
\end{equation}

Here, the $R$-matrix $R_{V,V'}$ is the intertwiner of certain special
finite-dimensional representations of the Yangian $Y(\g)$ or quantum
affine algebra, which is obtained via fusion from the fundamental
representations. 

The inhomogeneity in the model is both in the spectral parameters
$z_i$ associated with each lattice site $i$, as well as in the
representations $V_i(z_i)$ at each site.  

However each of the representations $V_i(z_i)$ are assumed to be of
Kirillov-Reshetikhin (KR) type \cite{KRqsys}. This type of representation
can be defined, for example, by means of its Drinfeld polynomials,
although the modules were originally defined in terms of fusion of
$R$-matrices instead. 

One way to define (and describe) Kirillov-Reshetikhin modules for $Y(\g)$
for any Lie algebra $\g$ is to say that it is the
smallest irreducible $Y(\g)$-module with $\g$-highest weight which is
proportional to one of the fundamental weights. Therefore, KR-modules
are labeled by a triple, $\al\in I_r$ where $I_r$ are the labels of
the nodes in the Dynkin diagram, $i\in \Z_+$ a non-negative integer,
and $z\in \C^*$ a complex number, corresponding to the localization
parameter. The $\g$-highest weight of such a module is $i\omega_\al$
where $\omega_\al$ is the fundamental weight corresponding to the
label $\al$, and the module can be denoted by $V_{\al,i}(z)$.

Kirillov and Reshetikhin were able to describe the structure of the spaces $V_{\al,i}(z)$ as $\g$-modules using the Bethe ansatz equations for such models. The results turned out to have deep combinatorial and
representation-theoretical implications. 

The Hilbert space of the model
\eqref{transfer} is
\begin{equation}\label{Hilbert}
\mathcal H = \underset{i=1}{\overset{N}{\otimes}} V_i,
\end{equation}
where $N$ is the number of lattice sites.  The {\em completeness
hypothesis} is that the Bethe vectors provide a basis for the set of
$\g$-highest weight vectors in the Hilbert space, in the following sense.

Note that we always have $\g\subset Y(\g)$, or
$U_q(\g)\subset U_q(\widehat{\g})$ in the case of the trigonometric
$R$-matrix. Thus, the Hilbert space $\mathcal H$ decomposes into a
direct sum of finite-dimensional $\g$
or $U_q(\g)$-modules:
$$
\mathcal H \underset{\g-{\rm mod}}{\simeq} \underset{\lambda\in
  P^+}{\oplus} V(\lambda)^{\oplus m_{\lambda,\mathcal H}},
$$ where $V(\lambda)$ is the irreducible $\g$-module (or
$U_q(\g)$-module) characterized by the highest weight $\lambda$. Here,
$P^+$ is the set 
of dominant integral weights of $\g$. 

The multiplicity is the
dimension of the space of $\g$-linear homorphisms,
\begin{equation}\label{dimhom}
m_{\lambda,\mathcal H} = \dim\left( \Hom_\g\left(\mathcal H,\
V(\lambda)\right)\right). 
\end{equation}

The completeness conjecture is that the number of linearly independent
Bethe vectors characterized by $\lambda$ is equal to
$m_{\lambda,\mathcal H}$.

Inasmuch as the dimensions of the representations, and hence the
counting arguments, are concerned, there is no dependence on whether
the $R$-matrix is rational or trigonometric (assuming that $q$ is
generic, in particular, that it is not a root of unity). 
There is no dependence on the choice of auxilliary space, since it
does not influence the dimension of the Hilbert space.

For each $\lambda$, assuming the string hypothesis for the solutions
to the Bethe equations\footnote{Although the string
hypothesis is known to fail in various scenarios, the counting
algorithm of \cite{KR} turns out to give the correct number of states
nonetheless.}, the Bethe integers parametrize the Bethe vectors. 
These are sets of distinct integers, one set for each pair $(\al, i)$
where $\al\in I_r$ corresponds to one of the simple roots of the
underlying algebra $\g$, and $i\in \N$ is the length of the
``string''. These integers correspond to the string centers. 

Given a set $\{n_{\al,i}\in \Z_+~:~\al\in I_r, i\in \N\}$,
choose a set of $m_{\al,i}$ distinct integers are chosen from the interval
$[0,p_{\al,i}]$, where
\begin{equation}\label{vacancy}
p_{\al,i} = \sum_{j} \min(i,j)n_{\al,j}  - \sum_{\beta}{\rm
sgn}(C_{\al,\beta}) \sum_j \min(|C_{\al,\beta}| j, |C_{\beta,\al}|i)
m_{\beta,j}, \quad(\al\in I_r, i\in \N).
\end{equation}
These so-called {\em vacancy numbers} are determined from the large
$N$ analysis (where $N$ is the number of sites) of the thermodynamic
Bethe ansatz equations. The integers $n_{\al,i}$ are parameters of the
model -- they parametrize the representations $V_i(z_i)$ in the
definition of the model and hence $\mathcal H$.

There are two restrictions on the choice of numbers $\{m_{\al,i}\}$. The
first is clearly that the vacancy numbers should be non-negative,
\begin{equation}\label{ppositive}
p_{\al,i}\geq 0.
\end{equation} 
The second keeps track of the sector $\lambda$ to
which the solutions belong:
\begin{equation}\label{spin}
\sum_{i} i n_{\al,i} - \sum_{\beta,i} i C_{\al,\beta} m_{\beta,i} = l_\al,
\end{equation}
where the weight is $\lambda = \sum_{\al} l_\al \omega_\al$
and $\omega_\al$ are the fundamental weights of $\g$. Here, $C$ is the
Cartan matrix of the Lie algebra $\g$, with the convention that
$$
C_{ij} = \frac{2 (\al_i,\al_j)}{(\al_i,\al_i)},\quad \al_k\in \Pi.
$$

The number of ways to choose $m$ distinct integers from the interval
$[0,p]$ is the binomial coefficient:
\begin{equation}\label{binomial}
{m+p\choose m} = \frac{(p+m)(p+m-1)\cdots (p+1)}{m!}.
\end{equation}
Thus, for fixed $\lambda$ and $\bn=(n_{\al,i})_{i\in \N; \al\in I_r}$,
the number of solutions to the Bethe equations is
\begin{equation}\label{M}
M_{\lambda;\bn} = \sum_{\underset{p_{\al,i}\geq
    0}{m_{\al,i}\geq 0}} \prod_\al  
{m_{\al,i}+p_{\al,i}\choose m_{\al,i}},
\end{equation}
where the sum is taken over all non-negative integers
$\bm=(m_{\al,i})$ such that equation \eqref{spin} holds.

The completeness conjecture is therefore the following:
\begin{conj}\cite{KR}\label{KRconj}
The dimension of the space of $\g$-linear homomorphisms \eqref{dimhom} is
$$
m_{\lambda;\mathcal H} = M_{\lambda,\bn}
$$ where $\bn=(n_{\al,i})$ and 
$\bn_{\al,i}$ is the number of Kirillov-Reshetikhin type
modules $V_j$ in the tensor product \eqref{Hilbert} which have a
$\g$-highest $i \omega_\al$.
\end{conj}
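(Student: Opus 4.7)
\textbf{Proof proposal for Conjecture \ref{KRconj}.}
The plan is to compute both sides of the claimed equality as character-theoretic quantities in the $\g$-module category and match them.

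First, I would rewrite the left-hand side as a character extraction problem. As a $\g$-module, $\mathcal H$ is a tensor product of $n_{\al,i}$ copies of the KR-module $V_{\al,i}$ for each pair $(\al,i)$, so
$$
\ch_\g(\mathcal H) \;=\; \prod_{\al\in I_r,\ i\in\N} Q_{\al,i}^{\,n_{\al,i}}, \qquad Q_{\al,i} := \ch_\g V_{\al,i},
$$
and $m_{\lambda,\mathcal H}$ is the coefficient of $\ch V(\lambda)$ in this product. The first key input I would use is that the family $\{Q_{\al,i}\}$ satisfies the $Q$-system recursion; this is the representation-theoretic content of the theorems of Nakajima and Hernandez cited in the introduction, and together with the natural initial conditions $Q_{\al,0}=1$ and $Q_{\al,1}=\ch V(\omega_\al)$ it determines every $Q_{\al,i}$ as a polynomial in the fundamental characters.

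The second step is to rewrite $\prod_{\al,i} Q_{\al,i}^{\,n_{\al,i}}$ in a ``fermionic'' form in which an auxiliary sum over occupation variables $\bm=(m_{\al,i})$ appears explicitly. I would stratify the tensor product character by the weight sector \eqref{spin} and argue that the contribution of each stratum corresponds to placements of string centers in intervals whose lengths are the vacancy numbers \eqref{vacancy}. The level-truncation that selects the irreducible component $V(\lambda)$ is precisely what produces the non-negativity constraint \eqref{ppositive}, and for each admissible $\bm$ the number of string-center placements is the product of binomials $\prod_{\al,i}\binom{m_{\al,i}+p_{\al,i}}{m_{\al,i}}$. Summing these over $\bm$ yields $M_{\lambda,\bn}$ and so reduces the conjecture to a purely combinatorial character identity.

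The principal obstacle is establishing this string decomposition beyond type $A$. In type $A$ one has the Kerov--Kirillov--Reshetikhin bijection between rigged configurations and highest-weight Young tableaux, which gives the identity directly, but no such closed-form bijection is available in general. My strategy would be to argue inductively on $i$ via the $Q$-system recursion itself: the polynomiality of solutions in the initial variables, together with the cluster-algebra framework to be developed in Section~\ref{sec:cluster} and the Laurent phenomenon, should reduce the multiplicity identity $m_{\lambda,\mathcal H}=M_{\lambda,\bn}$ to a family of relations in the character ring that can be checked by comparing dominant weight components and exploiting the injectivity of the character map on $P^+$. The core difficulty lies in controlling the cancellations in the alternating sums that arise when truncating to the irreducible $V(\lambda)$, and this is precisely where the polynomiality property of the $Q$-system is indispensable.
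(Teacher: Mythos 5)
There is a genuine gap, and it sits exactly where the real difficulty of the conjecture lies. Your second step asserts that stratifying the character of $\mathcal H$ by the weight sector \eqref{spin} produces the $M$-sum \eqref{M}, i.e.\ a sum restricted by the positivity condition $p_{\al,i}\geq 0$ of \eqref{ppositive}, with each stratum contributing the product of binomials. That is not something the representation theory gives you: the constraint \eqref{ppositive} comes from the Bethe-ansatz string hypothesis (vacancy numbers must be non-negative for the string centers to exist), not from any truncation to the irreducible component $V(\lambda)$. What the representation-theoretic input (KR characters solving the $Q$-system, via Nakajima and Hernandez, combined with the argument of \cite{HKOTY}) actually yields is Theorem \ref{KRtwo}: $m_{\lambda;\mathcal H}=N_{\lambda;\bn}$, where the $N$-sum \eqref{N} is taken \emph{without} the positivity restriction and therefore contains many negative terms. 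Asserting that the stratification lands directly on the $M$-sum is asserting the conjecture itself.

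The route the paper takes (and the only known general one) splits the problem in two: Theorem \ref{KRtwo} identifies $m_{\lambda;\mathcal H}$ with the $N$-sum, and the separate, purely combinatorial Theorem \ref{MN} of \cite{DFK} establishes $M_{\lambda;\bn}=N_{\lambda;\bn}$; the conjecture is then the composition of the two. The polynomiality property (Lemma \ref{polynomiality}) that you invoke at the end is indeed the essential ingredient — but of Theorem \ref{MN}, the combinatorial identity controlling the cancellations between the $M$- and $N$-sums, not of a direct derivation of the $M$-sum from characters. Your closing paragraph gestures at this ("controlling the cancellations\ldots is precisely where the polynomiality property is indispensable") but does not supply the argument; as written, the proposal assumes the subtle combinatorial identity that constitutes the actual content of the theorem. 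If you want to salvage the outline, replace your second step by the two-theorem decomposition above and cite the corresponding results rather than re-deriving them.
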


Until recently, a proof of this conjecture was available in various
special cases, by proving a bijection between the set of ``rigged
configurations'' which enumerate the Bethe integers of the form
described above, and crystal paths (see \cite{KR,KS,KSS} for
example).

However, a completely general
proof is available for the following statement:
\begin{thm}\label{KRtwo}\cite{Nakajima,Hernandez}
The dimension $m_{\lambda;\mathcal H}$  of the space of homomorphisms
is equal to the so-called $N$-sum \cite{KR,HKOTY}:
\begin{equation}\label{N}
N_{\lambda;\bn} = \sum_{m_{\al,i}\geq 0} \prod_\al  
{m_{\al,i}+p_{\al,i}\choose m_{\al,i}},
\end{equation}
where the summation is taken over all non-negative integers $\bm$
subject to the restriction \eqref{spin}, but not subject to the
positivity condition \eqref{ppositive}.
\end{thm}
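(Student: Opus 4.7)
The plan is to compute the $\g$-multiplicity $m_{\lambda;\mathcal H} = \dim\Hom_\g(\mathcal H, V(\lambda))$ by expanding the character $\ch_\g\mathcal H = \prod_i \ch_\g V_i$ in the basis of irreducible $\g$-characters and reading off the coefficient of $\ch V(\lambda)$. The required ingredients are a manageable formula for $\ch_\g V_{\al,i}$ and a combinatorial identity rewriting the resulting product as the $N$-sum \eqref{N}.

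First I would import the core representation-theoretic input, which is the substance of the theorem: the $\g$-characters $Q_{\al,i}:=\ch_\g V_{\al,i}$ of the KR-modules satisfy the $Q$-system. For simply-laced $\g$, Nakajima establishes this by computing the $q$-character of $V_{\al,i}$ via the equivariant $K$-theory / cohomology of graded quiver varieties and then specializing; Hernandez treats the remaining cases through a $t$-deformation of the $q$-character algebra. With this in hand, each $Q_{\al,i}$ is a polynomial in the initial data (e.g.\ the fundamental-representation characters $Q_{\beta,1}$), and the problem becomes purely combinatorial.

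Next I would carry out the formal expansion of
$$\ch_\g\mathcal H = \prod_{\al,i} Q_{\al,i}^{n_{\al,i}}$$
in the basis of $\g$-characters. Each term in the expansion is indexed by a tuple of auxiliary multiplicities $\bm=(m_{\al,i})$ counting how often the various KR-character factors appear; the binomial coefficient $\binom{m_{\al,i}+p_{\al,i}}{m_{\al,i}}$ arises as a bosonic multiplicity, with the effective ``slot count'' $p_{\al,i}$ determined by the Cartan-matrix combinatorics inherited from the $Q$-system recursion, and matches \eqref{vacancy}. The linear constraint \eqref{spin} is imposed by projecting onto the $\lambda$-weight subspace of the tensor product.

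The main obstacle, and the conceptual point of the theorem, is that the representation-theoretic side naturally produces the $N$-sum, with $m_{\al,i}$ ranging freely over $\Z_+$ and with \emph{no} positivity condition on $p_{\al,i}$: the binomial $\binom{m+p}{m}$ is a polynomial in $p$ and is meaningful for any integer $p$. The positivity restriction \eqref{ppositive} is a genuinely different, Bethe-ansatz-combinatorial constraint whose identification with the same multiplicity is the stronger Conjecture~\ref{KRconj} and requires the polynomiality of $Q$-system solutions — the very property discussed in the remainder of this paper. Thus Theorem~\ref{KRtwo} is strictly the algebraic consequence of combining the $Q$-system (established by Nakajima/Hernandez) with the formal expansion above, without invoking the more subtle completeness argument.
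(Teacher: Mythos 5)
The paper does not actually prove Theorem \ref{KRtwo}; it is imported by citation, and the only hint of its provenance in the text is the later remark that \cite{HKOTY} derived the $N$-sum as a consequence of the KR-characters solving the $Q$-system, with the required properties of KR-modules supplied by Nakajima and Hernandez. Your outline correctly identifies this two-step architecture, so there is nothing to fault in the overall plan.

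The genuine gap is in the second step, which is where all the content lies and which you assert rather than derive. The $m_{\al,i}$ in the $N$-sum are not multiplicities ``counting how often the various KR-character factors appear'' in an expansion of $\prod_{\al,i} Q_{\al,i}^{n_{\al,i}}$ --- that product has fixed factors prescribed by $\bn$, and expanding it in irreducible characters gives no free summation variables of this kind. In the actual derivation the $m_{\al,i}$ arise as auxiliary variables in a generating-function/constant-term computation of $\dim\Hom_\g(\mathcal H, V(\lambda))$, and the binomial coefficients come from expanding geometric-series factors whose exponents turn out to be exactly the vacancy numbers \eqref{vacancy}; nothing in your sketch forces that specific form of $p_{\al,i}$, and the phrase ``bosonic multiplicity determined by the Cartan-matrix combinatorics'' names the desired answer without producing it. You also omit an essential hypothesis that the paper itself flags: the reduction to the $Q$-system requires, beyond the recursion, an asymptotic (convergence) condition on the $Q_{\al,i}$ as $i\to\infty$. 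Finally, the polynomiality of $Q_{\al,i}$ in the $Q_{\beta,1}$, which you invoke at the outset, is not needed for Theorem \ref{KRtwo}; in this paper it enters only in the proof of the $M=N$ identity, Theorem \ref{MN}.
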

Note that the binomial coefficient with the definition
\eqref{binomial} is perfectly well-defined for values of $p<0$ as long
as $m<-p$. In fact, there is an identity,
$$
{m+p \choose m} = (-1)^m {-p-1\choose m}.
$$

Although at first glance, there is little difference between the
$M$-sum \eqref{M} and the $N$-sum \eqref{N} it is, in fact, a rather
subtle combinatorial identity. The obvious difference is that the
$M$-sum is a summation over manifestly non-negative terms, where in
the $N$-sum, there are many negative terms, and many cancellations
take place. However, the two sums are equal:
\begin{thm}\label{MN}\cite{DFK}. 
\begin{equation}
M_{\lambda;\bn}=N_{\lambda;\bn}.
\end{equation}
\end{thm}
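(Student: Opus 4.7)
The plan is to encode both sums as constant terms in formal generating series associated to the $Q$-system, and then to use polynomiality of the $Q$-system solutions in their initial data to identify the two expressions.

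First, I would apply the formal identity $\binom{m+p}{m} = [x^m](1-x)^{-p-1}$, valid for any integer $p$ (with $m<-p$ in the negative case), to rewrite
$$N_{\lambda;\bn} = \sum_{\bm} \prod_{\al,i} [x_{\al,i}^{m_{\al,i}}] (1-x_{\al,i})^{-p_{\al,i}(\bm)-1},$$
where the sum is over non-negative arrays $\bm$ satisfying the spin constraint \eqref{spin}. The subtlety is that $p_{\al,i}$ depends linearly on $\bm$ through the vacancy-number formula \eqref{vacancy}. Rearranging and using the $Q$-system recursion, one rewrites the product of generating series as a ratio of polynomials in shifted $Q$-variables, producing a closed-form expression for the generating function of $N_{\lambda;\bn}$ (summed over $\lambda$) as a constant term extracted from a rational function built out of the $Q_{\al,i}$.

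Second, the analogous expression for $M_{\lambda;\bn}$ differs only in that the summation is restricted to the sub-cone $\{\bm : p_{\al,i}(\bm)\geq 0 \text{ for all } \al,i\}$. Equivalently, $M$ and $N$ differ by a choice of contour in the constant-term expression: the $N$-sum is the total formal residue, while the $M$-sum is the residue along a contour that avoids the hyperplanes $p_{\al,i}=0$. The difference $N_{\lambda;\bn}-M_{\lambda;\bn}$ therefore decomposes into a signed sum of residues at the loci where some $p_{\al,i}$ is negative.

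Third, I would show that each such spurious residue vanishes. This is the point at which the polynomiality of the $Q$-system solutions (a deep representation-theoretic input via Theorem \ref{KRtwo} and the work of Nakajima and Hernandez) plays its central role: the rational function whose residue computes the difference is, up to explicit factors, a polynomial expression in the $Q$-system initial data $(Q_{\al,0},Q_{\al,1})$, so it has no poles at the offending loci, forcing the residues to vanish.

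I expect the main obstacle to be the first step, producing the clean generating-function expression for $N_{\lambda;\bn}$ in terms of $Q$-variables. The vacancy numbers \eqref{vacancy} mix the indices $\al$ and $i$ non-trivially through the Cartan matrix and the $\min(i,j)$ weights, and rearranging the formal products so as to expose the $Q$-system recursion is the technical heart of the argument. Once that is achieved, the identification $M_{\lambda;\bn}=N_{\lambda;\bn}$ follows from polynomiality in a largely formal manner.
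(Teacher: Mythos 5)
A preliminary remark: this paper does not actually prove Theorem \ref{MN}; it quotes the result from \cite{DFK} and only records that the proof there rests on two inputs, namely that the characters of KR-modules solve the $Q$-system and the polynomiality statement of Lemma \ref{polynomiality}. Measured against that attributed strategy, your plan points in the right direction: the $N$-sum is indeed handled in \cite{DFK} (following \cite{HKOTY}) by a constant-term/generating-function rewriting in which the vacancy numbers \eqref{vacancy} enter as exponents, and the polynomiality of the $Q_{\al,i}$ in the fundamental characters is what ultimately allows the unrestricted $\bm$-sum to be traded for the restricted one. So you have identified the correct ingredients and the correct overall shape of the argument.

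As a proof, however, the proposal has a genuine gap, and it sits exactly where you say you expect the obstacle. Two concrete points. First, because $p_{\al,i}$ depends linearly on all of the $m_{\beta,j}$ through \eqref{vacancy}, the multi-sum obtained from $\binom{m+p}{m}=[x^m](1-x)^{-p-1}$ does not factor; obtaining a closed rational expression requires organizing the sum by a cutoff on the string length $i$ and performing a descending induction in which the $Q$-system recursion is used to telescope one level at a time. That inductive structure is the entire content of the argument in \cite{DFK}, and in your sketch it is replaced by the phrase ``rearranging and using the $Q$-system recursion,'' which is an assertion rather than a step. Second, the residue-vanishing claim in your third step does not follow from Lemma \ref{polynomiality} as stated: that lemma says the $Q_{\al,i}$ are polynomials in the variables $Q_{\beta,1}$ after setting $Q_{\al,0}=1$, which is a statement about the final characters, not about the absence of poles of the constant-term integrand in the auxiliary variables at the loci where some $p_{\al,i}<0$. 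What is actually needed is control of where the poles of the iterated-residue integrand sit relative to the contour at each stage of the induction, which is a separate fact requiring proof. Until those two steps are supplied, the identification $M_{\lambda;\bn}=N_{\lambda;\bn}$ has not been established.
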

This provides a proof of the combinatorial
Kirillov-Reshetikhin conjecture \ref{KRconj} for all simple Lie algebras $\g$.

\subsection{$Q$-systems}

Theorems \ref{KRtwo} and \ref{MN} are, in fact, corollaries of a more
basic fact, as was shown by \cite{HKOTY}, about the nature of the
solutions of an associated $Q$-system.

First, let us describe the notion of polynomiality in the case that
$\g=\sl_2$. In this case, Kirillov-Reshetikhin modules are in
one-to-one correspondence with the irreducible finite-dimensional
representations of $\sl_2$ and have the same dimension. That is, they
are irreducible as $\sl_2$-modules if we start with the Yangian
module, or as a $U_q(\sl_2)$-module if we refer to the quantum
affine algebra.

Let $Q_0=1$ be the character of the one-dimensional representation on
which $\sl_2$ acts trivially, and $Q_1$ the character of the defining
or fundamental representation of dimension 2. For simplicity, define
$t=Q_1$. The character of the $j+1$-dimensional irreducible
representation, with highest weight $j \omega_1$, is then known to be
polynomial -- in fact,
a Chebyshev polynomial of the second kind -- in $t$. This is just a
statement of the fact that any irreducible representation of $\sl_2$
is a polynomial representation, that is, it can be obtained from the
image of the Young symmetrizer acting on the tensor product of the
fundamental representation.

One can show that the Chebyshev polynomials $Q_j$, or, equivalently,
the characters of the $j+1$-dimensional irreducible representations of
$\sl_2$, satisfy a recursion relation
$$
Q_{j+1}Q_{j-1} + 1 = Q_{j}^2.
$$
This is the $Q$-system for $\sl_2$. 

It turns out that for any simple Lie algebra there is a similar system
of recursion equations. For simply-laced simple Lie algebras, we write
$\beta\sim \al$ if the nodes $\beta$ and $\al$ are  connected in the
Dynkin diagram of $\g$. Then 
the $Q$-system for a simply-laced Lie algebra $\g$ is
\begin{equation}\label{Qsystem}
Q_{\al,k+1}= Q_{\al,k-1}^{-1}\left(Q_{\al,k}^2-\prod_{\beta\sim\al}
Q_{\beta,k}\right),\ (k>0), \qquad Q_{\al,0}=1,\quad Q_{\al,1}=t_\al.
\end{equation}
There are also $Q$-systems for any other simple Lie algebra
\cite{KR,HKOTY} and for twisted affine algebras \cite{HKOTT}. We do
not discuss these systems in this paper. 

We set the formal parameter $t_\al$ to be the $\g$-character of the
``fundamental KR-module'' $Q_{\al,1}$,
corresponding to highest weight $\omega_\al$. In general, this is not
necessarily an irreducible $\g$-module. For example, if $\g=D_4$,
$$
Q_{2,1} = \ch V({\omega_2}) + \ch V(0),
$$
where $V(0)$ is the trivial representation. These modules are also
known as fundamental representations of Yangians (or minimal
affinizations) and were studied in \cite{Kleber,ChariKR}.

Under fairly mild asymptotics conditions \cite{HKOTY}, it can be shown
that the solutions of the $Q$-systems are characters of the
KR-modules. This has been done for the special case
above by Nakajima \cite{Nakajima} and in complete generality by
Hernandez \cite{Her07}.

In the case of $\sl_n$, KR-modules are in fact the irreducible $\sl_n$
representations with ``rectangular highest weights''
the form $i \omega_\al$. 

In the case where $\g$ is not of type $A$, the characters in question
are sometimes not irreducible as $\g$-modules. However, they have a
triangular decomposition in terms of irreducible characters with smaller
highest weights, in the sense of partial ordering of highest weights.
That is,
\begin{equation}\label{decomposition}
V_{\al,i}(z)~\underset{\g}{\simeq}~ V(i\omega_\al) \oplus\ \left(
\underset{\lambda< i \omega_\al}{\oplus}
 V(\lambda)^{m_{\lambda;i\omega_\al}}\right).
\end{equation}
The decomposition multiplicities are given by the
Kirillov-Reshetikhin conjecture.

In \cite{HKOTY} it was shown that Theorem \ref{KRtwo} is a consequence
of the fact that the characters of KR-modules solve the $Q$-system. 

In proving \ref{MN}, an essential ingredient was this fact, and the
following Lemma, which follows entirely from
representation-theoretical arguments and the theorem of
\cite{Nakajima} or more generally \cite{Hernandez} (see also \cite{KleberPoly}): 
\begin{lemma}\label{polynomiality}\cite{DFK}
The solutions
of the $Q$-system, for any Lie algebra, are a polynomial in the
variables $\{ Q_{\al,1}~|~\al\in I_r\}$.
\end{lemma}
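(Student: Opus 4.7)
My plan is to combine the representation-theoretic interpretation of the solutions with the structure of the representation ring $R(\g)$. The starting point is the theorem of Nakajima \cite{Nakajima} for simply-laced $\g$, extended by Hernandez \cite{Her07} to all simple $\g$: the solution $Q_{\al,k}$ of the $Q$-system equals the $\g$-character of the Kirillov-Reshetikhin module $V_{\al,k}$. Hence $Q_{\al,k}\in R(\g)$, and by Weyl's theorem $R(\g) = \Z[\chi_1,\ldots,\chi_r]$ is the polynomial ring over $\Z$ in the fundamental characters $\chi_\beta := \ch V(\omega_\beta)$. So the lemma reduces to the claim that
\[
R(\g) = \Z[Q_{\gamma,1}:\gamma\in I_r],
\]
i.e.\ every fundamental character is a $\Z$-polynomial in the level-one KR-characters.

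To prove this, I would argue by induction on the dominance order on $P^+$: show that $\ch V(\mu)\in \Z[Q_{\gamma,1}:\gamma\in I_r]$ for every dominant weight $\mu$. The base case $\mu=0$ is immediate since $\ch V(0)=1$. For the inductive step, split into two cases. If $\mu=\omega_\beta$ is fundamental, the $k=1$ specialization of the triangular decomposition \eqref{decomposition} gives
\[
\chi_\beta = Q_{\beta,1} - \sum_{\nu\in P^+,\,\nu<\omega_\beta} m_{\nu;\omega_\beta}\,\ch V(\nu),
\]
and each summand on the right lies in $\Z[Q_{\gamma,1}:\gamma\in I_r]$ by the induction hypothesis. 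If instead $\mu$ is not fundamental, factor $\mu=\omega_{\beta_1}+\mu'$ with $\mu'\in P^+$ nonzero, and use the tensor product expansion
\[
\chi_{\beta_1}\cdot\ch V(\mu') = \ch V(\mu) + \sum_{\nu\in P^+,\,\nu<\mu} n_\nu\,\ch V(\nu),\qquad n_\nu\in\Z_{\geq 0},
\]
to solve for $\ch V(\mu)$. Since $\omega_{\beta_1}$, $\mu'$ and every $\nu$ appearing on the right are strictly dominated by $\mu$, the induction hypothesis applies to each. Combining this with the first paragraph yields $Q_{\al,k}\in\Z[Q_{\gamma,1}:\gamma\in I_r]$.

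The principal obstacle is not the inductive combinatorics, which is routine once the framework is set up, but rather the deep representation-theoretic input that feeds it: the lemma is meaningful only because $Q_{\al,k}$ is \emph{a priori} the character of a genuine finite-dimensional $\g$-module, and establishing this is the content of \cite{Nakajima,Her07}. A secondary subtlety is that the inductive argument relies on the ``lower order'' terms in \eqref{decomposition} being irreducibles with highest weight strictly less than $\omega_\beta$ in the dominance order, so that the induction is well-founded; this triangularity is part of the structural content of \eqref{decomposition} and itself stems from the representation theory of KR-modules.
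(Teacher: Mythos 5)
Your argument is essentially the paper's own proof, written out in more detail: both rest on the Nakajima--Hernandez identification of $Q_{\al,k}$ with the character of a KR-module, the unitriangularity of the decomposition \eqref{decomposition} (which puts $Q_{\al,k}$ in $R(\g)=\Z[\chi_1,\dots,\chi_r]$), and a second application of \eqref{decomposition} at level one to invert the triangular system and express each fundamental character $\chi_\beta$ as a polynomial in the $Q_{\gamma,1}$. The one step to repair is the non-fundamental case of your induction: if $\mu=\omega_{\beta_1}+\mu'$, neither $\omega_{\beta_1}$ nor $\mu'$ need be strictly dominated by $\mu$, since $\mu-\omega_{\beta_1}=\mu'$ is dominant but generally not a nonnegative integer combination of simple roots (already for $\sl_2$ one has $2\omega_1-\omega_1=\tfrac{1}{2}\al_1$), so the induction hypothesis does not apply to those two terms as claimed. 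The standard remedy is to compare $\ch V(\mu)$ with the full monomial $\prod_\al \chi_\al^{m_\al}$ for $\mu=\sum_\al m_\al\omega_\al$, whose expansion is $\ch V(\mu)$ plus irreducible characters $\ch V(\nu)$ with $\nu<\mu$ dominant, and to induct over the finite poset of dominant weights below $\mu$; this is the classical proof that $R(\g)$ is a polynomial ring on the fundamental characters, which the paper simply takes for granted in the phrase ``any KR-module is in the Grothendieck ring generated by the fundamental and trivial representations.''
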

\begin{proof} This follows from
\eqref{decomposition}, which shows that
the transition matrix between KR-modules and irreducible $\g$-modules
is unitriangular and hence invertible. Thus, any KR-module is in the
Groethendieck ring generated by the fundamental and trivial representations of
$\g$. That is, the character $Q_{\al,i}$ is a polynomial in the
characters of the fundamental $\g$-modules. Finally, using
the relation \eqref{decomposition} again, it is seen to be a
polynomial in $Q_{\beta,1}$.
\end{proof}

This statement can be rephrased as follows: The solutions of the
$Q$-system are polynomials in the variables $\{Q_{\al,1}\}_{\al\in
I_r}$ in the limit $Q_{\al,0}\to 1$.  In general, we do not know how
to show this polynomiality property directly from the $Q$-system.

The proof of this statement relies entirely on the fact that the
characters of KR-modules are solutions of the $Q$-system. However, the
$Q$-system can be viewed as a dynamical system, or a recursion
relation, independently of this fact. We show below that it can be
expressed as a cluster algebra. 

Seen in this light, the polynomiality property appears to be closely
connected to the Laurent phenomenon \cite{FZlaurent} for cluster
algebras (this was also recently suggested by Hernandez,
\cite{Her07}). The rest of this note is devoted to describing the
cluster algebra structure of $Q$-systems, as a first step towards
understanding this phenomenon. We will show in Section \ref{Qcluster} 
that the $Q$-system for
simply-laced Lie algebras is, in fact, a quotient of a subgraph of a
cluster algebra.

\section{$Q$-systems as Cluster algebras}
\subsection{Cluster algebras}
For the definition of a cluster algebra, we refer the reader to the
excellent summary in \cite{FZfour}. We recap this definition briefly
here, specialized to the particularly simple case we consider in the
context of $Q$-systems corresponding to simply-laced simple Lie
algebras.

First, consider a regular $n$-ary tree $\mathbb T_n$. This is a tree
with certain nodes, each node being connected to $n$ other nodes via
undirected, labeled edges. Each node is connected to edges labeled by
the distinct labels $1,...,n$.

At each node there is a seed of the form $(\bx,B)$. (More
generally, there are also coefficients $\by$ at each node, but in the case
under consideration here, it is possible to take a normalized cluster
algebra, with all coefficients are $1$, so we do not consider the more
general case of nontrivial coefficients.)
Here, $\bx$ is an
$n$-vector with entries $x_i$ and $B$ is a skew-symmetric $n\times n$ integer
matrix with entries $B_{ij}$. When
we want to be explicit we will label the seed by the node to which it
corresponds, e.g. $(\bx,B)_t$ is the seed at node $t$.

Nodes connected by an edge labeled $k$ correspond to a mutation of
the seed, or an evolution of the system in the direction $k$. This
mutation is denoted by a map $\mu_k: (\bx,B)\mapsto (\bx',B')$. The
map acts as follows on the seed:

\begin{eqnarray}
\mu_k(x_i) &=& \left\{ \begin{array}{ll} x_i, & i\neq k; \\
x_k^{-1}(\prod_{j=1}^n x_j^{[B_{jk}]_+} + \prod_{j=1}^n
x_j^{[-B_{jk}]_+}),& i=k.\end{array}\right. \label{mutation}\\
\mu_k(B_{ij}) &=& \left\{ \begin{array}{ll}
-B_{ij} & \hbox{if $i=k$ or $j=k$};\\
B_{ij}+{\rm sign}(B_{ik})[B_{ik}B_{kj}]_+, &
\hbox{otherwise}.\end{array}\right. 
\end{eqnarray}
Here, $[n]_+$ means the positive part of the integer $n$. It is equal
to $0$ if $n\leq 0$, and is equal to $n$ if $n>0$.

One can immediately check that $\mu_i\circ \mu_i(\bx,B)=(\bx,B)$,
which is why the edges of the tree are not oriented.

A cluster pattern is an assignment of a labeled seed to each node $t$
in $\mathbb T_n$, such that the seeds at connected nodes are related by the
relevant seed mutations.  A cluster algebra is the algebra in the
cluster variables $\bx_t$ which are related by such mutations. 

Thus, one has an $n$-ary tree $\mathbb T_n$ and an evolution on it,
connecting its nodes along the edges determined by the mutation matrix $B$.
Starting from any node, one can compute the cluster variables $\bx$
and the mutation matrix $B$ for any other node by traversing the tree.

A remarkable property of the mutations of a cluster algebra determined
from such a matrix $B$ is the {\em Laurent phenomenon}. This is a
theorem which states that the cluster variables at any node are
Laurent polynomials in the cluster variables $\bx$ at any other node
\cite{FZlaurent}. This is a highly non-trivial result, because in
general, the mutations \eqref{mutation} give the cluster variables as
a rational function in the cluster variables at another node.

For any given seed $(\bx,B)$ one can ask about the structure of the
evolution on the tree determined by such a seed. If we identify nodes
corresponding to identical seeds, we obtain a quotient graph with a
structure determined by the seed. For example, it is easy to see that
$\mu_i \circ \mu_j = \mu_j\circ \mu_i$ if $B_{ij}=0$. Thus, one can
consider the quotient graph where the nodes corresponding to such an
equivalence relation are identified (see figure \ref{graph} for an
example).

Below we formulate the $Q$-system as a quotient of a subgraph of a
cluster algebra determined by a particularly simple matrix $B$.

\subsection{The $Q$-system as a cluster algebra}\label{Qcluster}
Let $\g$ be a simply-laced, simple Lie algebra of rank $r$, and let
$C$ be its Cartan matrix.

Consider the family of commutative variables $\{Q_{\al,i}\ : \al\in
I_r, i\in\Z\}$ defined by the recursion relation:
\begin{equation}\label{Qsys}
Q_{\al,k+1}= Q_{\al,k-1}^{-1}(Q_{\al,k}^2 - \prod_{\beta\sim\al}
Q_{\beta,k}^{-C_{\al,\beta}}).
\end{equation}
With the initial conditions $Q_{\al,0}=1$ and $Q_{\al,1}=t_\al$ (a
formal variable), this is the $Q$-system \eqref{Qsystem} if one
considers only $k\geq 1$.

In order to make contact with the usual definition of cluster
algebras, it is useful to normalize the system \eqref{Qsys} to get rid
of the minus signs. This is possible for the $Q$-system associated
with any simple Lie algebra.

\begin{lemma}\label{renormalize}
For each simple Lie algebra $\g$, there exists a set of complex
numbers $\{\epsilon_\al\}_{\al\in I_r}$ such that the normalized cluster
variables $\widetilde{Q}_{\al,k}=\epsilon_\al Q_{\al,k}$ satisfy the
normalized 
system:
$$
\widetilde{Q}_{\al,k+1} =\frac{\widetilde{Q}_{\al,k}^2 +
  \prod_{\beta\sim\al} \widetilde{Q}_{\beta,k}}{\widetilde{Q}_{\al,k-1}}.
$$
\end{lemma}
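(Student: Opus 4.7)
The plan is to solve for the $\epsilon_\al$ by substituting $\widetilde Q_{\al,k} = \epsilon_\al Q_{\al,k}$ directly into the target recursion, reducing the problem to a purely algebraic condition on the constants $\{\epsilon_\al\}$, and then using invertibility of the Cartan matrix to produce a solution.

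First, clear denominators in the claimed identity: we want
$$\widetilde Q_{\al,k+1}\,\widetilde Q_{\al,k-1} \;=\; \widetilde Q_{\al,k}^2 + \prod_{\beta\sim\al}\widetilde Q_{\beta,k}.$$
Substituting $\widetilde Q_{\al,k}=\epsilon_\al Q_{\al,k}$ and dividing by $\epsilon_\al^2$, this is equivalent to
$$Q_{\al,k+1}Q_{\al,k-1} \;=\; Q_{\al,k}^2 + \frac{\prod_{\beta\sim\al}\epsilon_\beta}{\epsilon_\al^2}\prod_{\beta\sim\al}Q_{\beta,k}.$$
Comparing with the $Q$-system \eqref{Qsys}, which in the simply-laced case reads $Q_{\al,k+1}Q_{\al,k-1}=Q_{\al,k}^2-\prod_{\beta\sim\al}Q_{\beta,k}$, the two agree if and only if the constants satisfy the multiplicative system
$$\prod_{\beta\sim\al}\epsilon_\beta \;=\; -\epsilon_\al^{\,2}, \qquad \al\in I_r. \qquad (\star)$$

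Next, I would linearize $(\star)$ by writing $\epsilon_\al=e^{c_\al}$ with $c_\al\in\C$. Using that in the simply-laced case $C_{\al,\al}=2$ and $C_{\al,\beta}=-1$ precisely when $\beta\sim\al$ (and $0$ otherwise for $\beta\neq\al$), condition $(\star)$ becomes
$$-\sum_{\beta\in I_r} C_{\al,\beta}\,c_\beta \;\equiv\; i\pi \pmod{2\pi i\Z}, \qquad \al\in I_r,$$
i.e., $-C\mathbf c \equiv i\pi\,\mathbf 1$, where $\mathbf 1=(1,\dots,1)^T$. Since the Cartan matrix of any simple Lie algebra has nonzero determinant, one solution is given explicitly by $\mathbf c = -i\pi\, C^{-1}\mathbf 1$, and then $\epsilon_\al := \exp(c_\al)$ satisfies $(\star)$, which in turn yields the normalized recursion. (For a general simple Lie algebra beyond the simply-laced case, the same argument applies after replacing $(\star)$ by $\prod_\beta \epsilon_\beta^{-C_{\al,\beta}+2\delta_{\al,\beta}} = -1$ and running the same linearization through the Cartan matrix.)

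There is no real obstacle here beyond bookkeeping: the entire content of the lemma is the observation that the sign problem is a linear problem over $\C$ after passing to logarithms, and nonsingularity of $C$ makes it solvable. The only thing one has to be mildly careful about is that the initial condition $Q_{\al,0}=1$ gets rescaled to $\widetilde Q_{\al,0}=\epsilon_\al$, but the lemma only claims the recursion itself is normalized, not the initial data, so this is harmless.
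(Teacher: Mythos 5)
Your argument is correct and is essentially the paper's own proof: you reduce the normalization to the multiplicative condition $\prod_{\beta}\epsilon_\beta^{C_{\al,\beta}}=-1$ (your $(\star)$ is this condition rewritten via $C_{\al,\al}=2$, $C_{\al,\beta}=-1$ for $\beta\sim\al$), pass to logarithms, and invert the Cartan matrix, exactly as in the paper. One small caveat: your parenthetical for the non-simply-laced case is mis-stated --- the exponent $-C_{\al,\beta}+2\delta_{\al,\beta}$ vanishes at $\beta=\al$, so the condition loses its dependence on $\epsilon_\al$ and the resulting matrix $2I-C$ need not be invertible (it is zero for $A_1$); the correct exponent is simply $\pm C_{\al,\beta}$ with no correction term, as in the paper's equation \eqref{renorm} and the Remark following the lemma.
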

\begin{proof}
From the $Q$-system \eqref{Qsys}, we see that the normalization is 
a choice of $\{\epsilon_\al\}_{\al\in I_r}$ such that
\begin{equation}\label{renorm}
\prod_{\beta}\epsilon_\beta^{C_{\alpha,\beta}}=-1, \quad \forall \alpha.
\end{equation}
Let $\epsilon_\beta = e^{\mu_\beta}$ where $\mu_\beta \in \C$. Since
$C$ is invertible, \eqref{renorm} shows that it is sufficient to set
$\mu_\beta = i\pi \sum_\al C^{-1}_{\beta,\al}$.
\end{proof}
\begin{remark} This is true for the $Q$-system associated with any
  simple Lie algebra \cite{HKOTY}, not just the simply-laced case,
  following an almost identical proof to the one above. One simply 
  notes that the total degree of all characters $Q_{\beta,j'}$
  appearing on the right hand side of the $Q$-system for $Q_{\al,j+1}$
  is $|C_{\al\beta}|$, with $\beta\sim\al$ in each case.\end{remark}

This choice of normalization is not particularly natural from the
point of view of $Q$-systems in general, but it makes the formulation
as a cluster algebra simpler. 

We claim that the $Q$-system corresponds to the evolution of a certain
seed with a certain mutation matrix, defined on a subgraph of the
$n$-ary tree $\mathbb T_n$, where $n=2r$ and $r$ is the rank of $\g$.

\begin{defn} Define the node labeled by $k\in \Z$ as the node
  corresponding to the cluster variable $\bx=(x_1,...,x_{2r})$ given by
\begin{eqnarray}
x_{\al} &=& \widetilde{Q}_{\al,2k},\quad x_{\al+r} = \widetilde{Q}_{\al,2k+1},
\quad (\al\leq r).
\end{eqnarray}
The mutation matrix $B$ at the node $k$ is defined as follows: For all
$\al,\beta\in I_r$,
$$B_{\al+r,\beta} = C_{\al\beta},
\quad B_{\al\beta}=B_{\al+r,\beta+r}=0,\quad B_{\beta\al}=-B_{\al\beta}.$$
\end{defn}

That is, if $C$ is the Cartan matrix of a simply-laced Lie algebra,
the matrix $B$ is
\begin{equation}\label{Bmatrix}
B = \left(\begin{array}{rr} 0 & -C \\ C & 0 \end{array}\right),
\end{equation}
which is skew-symmetric because $C$ is symmetric in this case.

\begin{lemma}
The mutations $\mu_\al$ ($\al\in I_r$), when applied to the seed
$\bx$, commute. That is
$$\mu_\al\circ \mu_\beta(\bx,B) = \mu_\beta \circ \mu_\al(\bx,B),\quad
\al,\beta\in I_r.$$ 
The same statement holds for the mutations
$\mu_{\al+r}$ with $\al\in I_r$:
$$\mu_{\al+r}\circ \mu_{\beta+r}(\bx,B) = \mu_{\beta+r} \circ
\mu_{\al+r}(\bx,B),\quad \al,\beta\in I_r.$$
\end{lemma}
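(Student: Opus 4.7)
The plan is to derive both commutativities from a single general principle: in any seed $(\bx,B)$, if $B_{ij}=0$ then the seed mutations $\mu_i$ and $\mu_j$ commute. This principle was already foreshadowed in the remarks preceding the definition of the seed, so one only needs to argue it carefully and then apply it. Once it is in hand, the lemma is immediate: by inspection of \eqref{Bmatrix}, the upper-left $r\times r$ block of $B$ vanishes, so $B_{\al\beta}=0$ for all $\al,\beta\in I_r$; likewise the lower-right block vanishes, giving $B_{\al+r,\beta+r}=0$. Specialising the principle to $(i,j)=(\al,\beta)$ and to $(i,j)=(\al+r,\beta+r)$ with $\al\neq\beta$ in $I_r$ then produces the two stated identities.

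To establish the principle for the cluster variables, I would treat the three cases on $k$ separately. For $k\notin\{i,j\}$ the variable $x_k$ is fixed by both $\mu_i$ and $\mu_j$, so nothing happens. For $k=i$, observe that $\mu_j$ acts trivially on $x_i$ and that the formula \eqref{mutation} for $\mu_i(x_i)$ involves the factor $x_j^{[\pm B_{ji}]_+}=x_j^0=1$ since $B_{ji}=-B_{ij}=0$; thus $\mu_i(x_i)$ contains no $x_j$. It then suffices to know that the entries $B_{\ell i}$ used by $\mu_i$ are the same before and after $\mu_j$. A direct check of the matrix-mutation formula shows $\mu_j(B_{\ell i})=B_{\ell i}+\sgn(B_{\ell j})[B_{\ell j}B_{ji}]_+=B_{\ell i}$ for $\ell\neq i,j$, while $\mu_j(B_{ji})=-B_{ji}=0$, so the whole column $B_{\cdot,i}$ is preserved. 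A symmetric argument handles $k=j$.

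The analogous statement for the mutation matrix, $\mu_i\mu_j(B)=\mu_j\mu_i(B)$, is verified by a short case analysis of the entries $B_{kl}$, organised by whether $\{k,l\}$ meets $\{i,j\}$. In the generic case the exchange formula contributes the two correction terms $\sgn(B_{ki})[B_{ki}B_{il}]_+$ and $\sgn(B_{kj})[B_{kj}B_{jl}]_+$, which appear symmetrically in both orderings; the remaining cases collapse because $B_{ij}=B_{ji}=0$ kills all cross terms that could distinguish the two orders. The main (mild) obstacle will be writing out this matrix-side case analysis cleanly without being swamped by the piecewise definition of $\mu_k$; the $\bx$-side is entirely routine once the columns $B_{\cdot,i}$ are seen to be preserved by $\mu_j$, and the final specialisation to the two block-vanishing patterns of \eqref{Bmatrix} is immediate.
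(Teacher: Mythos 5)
Your proposal is correct and follows essentially the same route as the paper, which simply invokes the general fact that $\mu_i$ and $\mu_j$ commute whenever $B_{ij}=0$ and observes that the diagonal $r\times r$ blocks of $B$ vanish. The only difference is that you actually verify this general principle (correctly), whereas the paper states it without proof.
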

\begin{proof}
This is the result of the fact that the $r\times r$ diagonal blocks of
$B$ vanish. In general, the mutations $\mu_\al$ and $\mu_\beta$
commute whenever $B_{\al,\beta}=0$.
\end{proof}

In fact, some of the mutations from the two sets $\{1,...,r\}$ and
$\{r+1,...,2r\}$ commute, depending on the structure of the Cartan
matrix. However, we do not need this structure in order to describe
the cluster algebra corresponding to the $Q$-system.

We now describe a particularly simple subgraph of the tree $\mathbb
T_{2r}$, whose nodes correspond to cluster variables consisting only
of members of the family $\{Q_{\al,i}\}$, and whose edges correspond
only to mutations describing one of the recursion relations in the
$Q$-system \eqref{Qsys}.

In fact we will consider a quotient of this graph, obtained by
identifing nodes of the subgraph which have the same seed. In
particular, we take a quotient of the graph by the relations
$[\mu_\al, \mu_\beta]=0$ if $\al,\beta\in I_r$ or
$\al-r,\beta-r\in I_r$.

Starting from the node $k$, consider any subset of $I_r$, and traverse
the graph along the edges labeled by these elements. Since the
mutations corresponding to these elements commute, they can be taken
in any order.  Since $\mu_\al^2=1$, they can be assumed to be distinct.

\begin{defn}
The node $k'$ ($k\in \N$) is the node reached from the node $k$ by
traversing edges labeled by all elements of $I_r$. 
\end{defn}
This node is unique and independent of the order of the mutations in
this subset.
\begin{lemma}\label{firstmutation}
The seed corresponding to the node $k'$ is $(\bx',B') = \mu_r\circ \cdots
\circ \mu_1 (\bx,B)$ where
$$
\bx' = \left(\begin{array}{l}\widetilde{Q}_{1,2k+2}\\ \vdots \\
  \widetilde{Q}_{r,2k+2} \\ \widetilde{Q}_{1,2k+1}\\ \vdots \\ \widetilde{Q}_{r,2k+1}
\end{array}\right),\qquad B' = \left( \begin{array}{rr} 0 & C \\ -C &
  0\end{array}\right). 
$$
\end{lemma}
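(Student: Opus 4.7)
The plan is to apply the $r$ mutations $\mu_1,\ldots,\mu_r$ to the seed $(\bx,B)_k$, and since the preceding lemma guarantees they all commute on this seed, I may choose any convenient order, say $\mu_r\circ\cdots\circ\mu_1$. The proof splits into two independent pieces: computing the new cluster variables $\bx'$, and computing the new exchange matrix $B'$.

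For the cluster variables, I will compute $\mu_\al(x_\al)$ for each $\al\le r$ directly from the mutation formula \eqref{mutation}. Because the top-left block of $B$ vanishes, only indices $j=\beta+r$ contribute to the two products, and there $B_{\beta+r,\al}=C_{\beta\al}$. For a simply-laced Cartan matrix one has $[C_{\beta\al}]_+=2\delta_{\al\beta}$, while $[-C_{\beta\al}]_+$ equals $1$ when $\beta\sim\al$ and $0$ otherwise, so
\[
\mu_\al(x_\al)=\widetilde{Q}_{\al,2k}^{-1}\Bigl(\widetilde{Q}_{\al,2k+1}^{2}+\prod_{\beta\sim\al}\widetilde{Q}_{\beta,2k+1}\Bigr)=\widetilde{Q}_{\al,2k+2}
\]
by Lemma \ref{renormalize}. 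Since $\mu_\al$ leaves every other component of $\bx$ alone and the $r$ mutations commute on the seed, the full composition replaces each $x_\al$ by $\widetilde{Q}_{\al,2k+2}$ while leaving each $x_{\al+r}=\widetilde{Q}_{\al,2k+1}$ unchanged, producing the claimed $\bx'$.

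For the matrix, I will argue by induction on $s$, tracking the intermediate matrices $B^{(s)}:=\mu_s\circ\cdots\circ\mu_1(B)$ and maintaining the invariant that the top-left $r\times r$ block of $B^{(s)}$ is identically zero at every stage. Indeed, the only way $\mu_s$ can alter $B^{(s-1)}_{ij}$ for $i,j\le r$ and $i,j\ne s$ is through a term ${\rm sign}(B^{(s-1)}_{is})[B^{(s-1)}_{is}B^{(s-1)}_{sj}]_+$, and by the inductive hypothesis the second factor vanishes. The bottom-right $r\times r$ block is more subtle: an individual $\mu_s$ does introduce nonzero entries there (already visible in the $A_2$ case, where $B_{34}$ becomes $2$ after $\mu_1$), but these spurious contributions cancel pairwise under the subsequent mutations, so that after all $r$ steps the bottom-right block is again zero. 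Combined with the row/column sign flip that each $\mu_s$ performs on its own row and column, this will give $B^{(r)}=-B$, which is exactly the claimed $B'$.

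The main obstacle is this last cancellation in the bottom-right block, which is sensitive to the signs and positive parts in the mutation rule. I plan to handle it either by a careful direct induction using the simply-laced sign pattern $C_{\al\al}=2$, $C_{\al\beta}\in\{-1,0\}$ for $\al\ne\beta$, or more conceptually by invoking the general principle for skew-symmetric block matrices of the form given in \eqref{Bmatrix}: mutation at all indices of one of the two independent parts of the underlying bipartite quiver negates $B$. Either way, the variable computation is essentially automatic once the mutation formula is matched against the normalized $Q$-system, so the real work of the proof is in verifying the matrix transformation.
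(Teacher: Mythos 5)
Your treatment of the cluster variables coincides with the paper's: only the column $B_{\cdot,\al}$ enters the exchange relation, the nonzero exponents are $[C_{\beta\al}]_+=2\delta_{\al\beta}$ and $[-C_{\beta\al}]_+=1$ for $\beta\sim\al$, and the normalized $Q$-system of Lemma \ref{renormalize} identifies $\mu_\al(x_\al)$ with $\widetilde{Q}_{\al,2k+2}$. Your analysis of the matrix also follows the paper's route: the top-left block stays zero throughout, each $\mu_s$ flips the sign of its own row and column so the off-diagonal blocks become $\pm C$, and the whole weight of the proof rests on showing that the bottom-right block, though perturbed by individual mutations (your $A_2$ check of $B_{34}\mapsto 2\mapsto 0$ is correct), returns to zero after all $r$ steps.

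That last point is exactly where your proposal stops short, and one of your two suggested ways of finishing does not work. The matrix \eqref{Bmatrix} is \emph{not} bipartite in the Fomin--Zelevinsky source/sink sense: in row $\al\le r$ one has $B_{\al,\al+r}=-2$ but $B_{\al,\beta+r}=+1$ for $\beta\sim\al$, so the vertex $\al$ carries arrows of both orientations into the second block, and the principle ``mutating at all vertices of one part negates $B$'' is not available (indeed it fails for general skew-symmetric matrices with vanishing diagonal blocks; the cancellation genuinely uses the Cartan sign pattern). Your other route, the direct induction, is what the paper carries out in a separate lemma, and it does close the gap: since $B_{r+j,i}$ and $B_{i,r+k}$ ($i\le r$) are unchanged by every $\mu_{i'}$ with $i'\ne i$, $i'\le r$ (the correction terms involve an entry of the vanishing top-left block), the total change of $B_{r+j,r+k}$ is $\sum_{i=1}^r\sgn(C_{ji})[-C_{ji}C_{ik}]_+$; the $i=j$ and $i=k$ terms contribute $2(1-\delta_{jk})|C_{jk}|$ and $-2(1-\delta_{jk})|C_{jk}|$ respectively, and every other term vanishes because $C_{ji}C_{ik}\ge 0$ for $i\ne j,k$. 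You need to supply this computation (or an equivalent one) for the proof to be complete.
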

\begin{proof}
The statement about the cluster variable $\bx$ follows from the
definition of the $Q$-system. Note that $\mu_\al(x_\beta)=x_\beta$ if
$\beta\neq \al$ 
and therefore $x'_\beta$ corresponding to the node $k'$ has
$x'_\al = \mu_\al (x_\al) =
\widetilde{Q}_{\al,2k}^{-1}(\widetilde{Q}_{\al,2k+1}^2 +
\prod_{\beta\sim\al} \widetilde{Q}_{\beta,2k+1}) =
\widetilde{Q}_{\al,2k+2}$
where $\al\in I_r$.

Next, we note that if $k,l\neq i$ with $i,l\in I_r$,
then
$\mu_i(B_{k,l})=B_{k,l}$. This is because $B_{il}=0$, whereas
$\mu_i(B_{kl})=B_{kl}+\sgn(B_{ki})[B_{ki}B_{il}]_+ = B_{kl}+0$.

Moreover, $\mu_i(B_{ki})=-B_{ki}$ for $1\leq k\leq 2r$. Since the mutation
$\mu_i$ maintains skew-symmetry so $\mu_i(B_{lk})=B_{lk}$ if $l\in
I_r$ and $k,l\neq i$, and $\mu_i(B_{ik}) = -B_{ik}$ if $k\leq r$.

This shows that
$$
\mu_r\circ \cdots \circ \mu_1 (B ) =\left(\begin{array}{rr} 0 & C \\
  -C & A\end{array} \right)
$$ where $A$ is some matrix to be determined. The following lemma
shows that the matrix $A$ is $0$ if we start from the matrix $B$ as in
\eqref{Bmatrix}
\end{proof}
\begin{lemma}
Define
$$
B = \left( \begin{array}{rr} 0 & -C \\ C & A \end{array}\right),
$$
where $C$ is any Cartan matrix and $A$ is any matrix. Then
$B' := \mu_1 \circ \cdots \circ \mu_r (B)$ has the form
$$
B' = \left( \begin{array}{rr} 0 & C \\ -C & A \end{array}\right).
$$
\end{lemma}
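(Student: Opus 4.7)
The plan is to track how each of the four $r\times r$ blocks of $B$ is modified by the successive mutations $\mu_1,\dots,\mu_r$, exploiting the single observation that the top-left $r\times r$ block of $B$ is zero. First I would prove by a one-step induction that this top-left block remains zero after each $\mu_k$ with $k\le r$: for $i,j\le r$ with $i,j\ne k$ the additive correction $\mathrm{sign}(B_{ik})[B_{ik}B_{kj}]_+$ vanishes because $B_{kj}$ lies in the (still zero) top-left block, while the entries in row or column $k$ within that block are zero and remain zero under a sign flip. An immediate consequence is that $\mu_i\circ\mu_j=\mu_j\circ\mu_i$ for $i,j\le r$, so the order in the composition is irrelevant.

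Next I would analyze the off-diagonal blocks. For any $k\le r$ and any entry $B_{r+a,j}$ in the bottom-left block with $j\neq k$, the additive change under $\mu_k$ is proportional to $B_{k,j}$, which is zero. Hence this entry only ever changes when $\mu_j$ is applied, and then by a single sign flip. After all of $\mu_1,\dots,\mu_r$ each column of the bottom-left block has been flipped exactly once, turning $C$ into $-C$; a symmetric argument for rows turns the top-right block $-C$ into $+C$.

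The main content is the claim that the bottom-right block is preserved. An entry $(r+a,r+b)$ is never in the row or column of any mutation $\mu_k$ with $k\le r$, so it changes only additively; and by the previous step, at the moment $\mu_k$ fires the values of $B_{r+a,k}$ and $B_{k,r+b}$ are still the original $C_{a,k}$ and $-C_{k,b}$. Thus the total change equals
\[
\sum_{k=1}^r \mathrm{sign}(C_{a,k})\bigl[-C_{a,k}C_{k,b}\bigr]_+.
\]
Since $C_{kk}=2$ and $C_{ij}\le 0$ for $i\ne j$, the summand is nonzero only for $k\in\{a,b\}$ when $a\ne b$: $k=a$ contributes $-2C_{ab}$ and $k=b$ contributes $\mathrm{sign}(C_{ab})\cdot(-2C_{ab})=2C_{ab}$, which cancel (both vanish when $C_{ab}=0$); the case $a=b$ is trivial since $-C_{aa}^2=-4<0$ gives a zero positive part. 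The principal obstacle is the bookkeeping in this last step, ensuring that the off-diagonal block entries entering the additive formula still have their original values at the instant $\mu_k$ is applied; this is precisely what Steps~1 and~2 are designed to guarantee, and it is the propagation of the zero top-left block that makes the argument uniform in the order of the mutations.
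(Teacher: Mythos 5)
Your proof is correct and follows essentially the same route as the paper's: the off-diagonal blocks flip sign because the top-left block stays zero, and the bottom-right block is preserved because the sum $\sum_{k}\sgn(C_{a,k})[-C_{a,k}C_{k,b}]_+$ vanishes by the sign structure of the Cartan matrix (your $k\in\{a,b\}$ cancellation is the paper's $i=j$, $i=k$ split). You are in fact slightly more careful than the paper in justifying that the entries entering the additive corrections still carry their original values when each $\mu_k$ is applied.
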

\begin{proof}
According to Lemma \ref{firstmutation}, the transformation $\mu_1\circ
\cdots \circ \mu_r$ changes the sign of the first $r$ rows and columns
of $B$, because $B_{kl}=0$ if $k,l\leq r$. 

We must therefore check that the transformation leaves the matrix $A$
invariant. That is, we must show that for $j,k\in[1,r]$,
\begin{eqnarray*}
0&=&B_{r+j,r+k}'-B_{r+j,r+k} \\ 
&=& \sum_{i=1}^r \sgn(B_{r+j,i})[B_{r+j,i}B_{i,r+k}]_+\\
&=& \sum_{i=1}^r \sgn(C_{j,i})[-C_{j,i}C_{i,k}]_+\\
&=& \sgn(C_{jj})[-C_{jj}C_{jk}]_+ + \sgn(C_{jk}) [-C_{jk}C_{kk}]_+ +
\sum_{i\neq j,k} \sgn(C_{ji})[-C_{ji}C_{ik}]_+ \\
&=& 2 (1-\delta_{jk})|C_{jk}| - 2(1-\delta_{jk})|C_{jk}| + 0 = 0.
\end{eqnarray*}
In the first two terms, we used the fact that the diagonal elements of
$C$ are $2$ and the off-diagonal elements are negative. In the last
term we used the fact that both $C_{ji}$ and $C_{ik}$ are off-diagonal
and hence their product is positive.
\end{proof}

Graphically, the result of applying the mutations $\mu_\al$ ($\al\in
I_r$) sequentially starting with the node $k$ and identifying nodes
corresponding to different orderings of distinct mutations is a
hypercube of dimension $r$,
with two special vertices, $k$ and its mirror image $k'$.
After application of the $r$ distinct mutations $\{\mu_1,...,\mu_r\}$ 
to the node $k$ in any order, one ends
up at the node which we labeled $k'$, which is at the opposite end
of the hypercube. See, For example, figure \ref{graph} for the case of
$\sl_3$.
\begin{figure}[ht]
\begin{center}
\includegraphics[width=10cm]{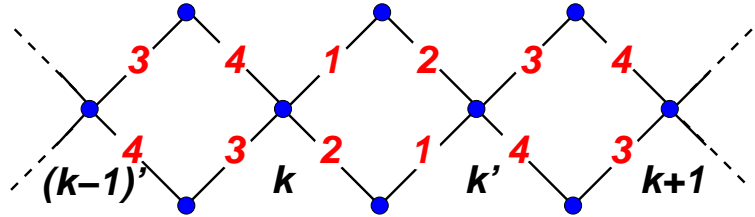} 
\end{center}
\caption{The subgraph $\mathbb T_\g$ for $\sl_3$, where the cluster
  variable $\bx$ is a $4$-vector. The label $i$ on an
  edge denotes the mutation $\mu_i$ relating the two nodes. The
  special nodes $k$, $k'$ and so forth are the nodes at the opposite ends of
  the squares.}
\label{graph}
\end{figure}

Similarly, starting from the node $k\in \N$, we can consider the
subtree of $\mathbb T_{2r}$ 
generated by edges labeled the set $\{\al+r\}$ with
$\al\in I_r$. The corresponding mutations also commute among
themselves. We again take the quotient by identifing nodes with the
same seed. The result is another hypercube, which is also given in
figure \ref{graph}.

An almost identical analysis for this sequence of mutations holds as
for the mutations $\mu_\al$ with $\al\in I_r$. 
Applying the sequence of mutations $\mu_{2r}\circ \cdots \circ
\mu_{r+1}$ to the seed $(\bx,B)_k$ we arrive at the node which we
label $(k-1)'$. We have

\begin{lemma}
The seed corresponding to the node $(k-1)'$ is 
$$
\bx' = \left( \begin{array}{l} \tQ_{1,2k}\\ \vdots \\ \tQ_{r,2k} \\
\tQ_{1,2k-1} \\ \vdots \\ \tQ_{r,2k-1} 
\end{array}\right),\quad B' = \left(\begin{array}{rr} 0 & C \\ -C & 0
\end{array}\right). 
$$
\end{lemma}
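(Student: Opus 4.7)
The plan is to mirror the proof of Lemma \ref{firstmutation}, exploiting the fact that $B$ has a zero bottom-right block in exactly the same way it had a zero top-left block in the earlier argument. Because $B_{\al+r,\beta+r}=0$ for all $\al,\beta\in I_r$, the mutations $\mu_{r+1},\ldots,\mu_{2r}$ pairwise commute, so the node $(k-1)'$ is well defined as the vertex of the resulting hypercube opposite $k$, and their composition can be analyzed in any convenient order.

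For the cluster variable I would first observe that each $\mu_{\al+r}$ fixes $x_\beta$ when $\beta\neq \al+r$; hence the top half $(\tQ_{1,2k},\ldots,\tQ_{r,2k})$ of $\bx$ is preserved, and only the $(\al+r)$-th entry changes. Using $B_{j,\al+r}=-C_{j,\al}$ for $j\leq r$, $B_{j,\al+r}=0$ for $j>r$, together with $C_{\al,\al}=2$ and the non-positivity of the off-diagonal entries of $C$, the mutation formula collapses to
$$
\mu_{\al+r}(x_{\al+r}) = \tQ_{\al,2k+1}^{-1}\Big(\prod_{\beta\sim\al}\tQ_{\beta,2k} + \tQ_{\al,2k}^{2}\Big),
$$
which, by the normalized $Q$-system of Lemma \ref{renormalize} read as a recursion for $\tQ_{\al,2k-1}$, equals $\tQ_{\al,2k-1}$.

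For the exchange matrix I would prove the obvious analogue of the unlabeled companion lemma above: if $\widetilde{B}=\left(\begin{array}{rr} A & -C \\ C & 0\end{array}\right)$ with $C$ a Cartan matrix and $A$ an arbitrary matrix, then $\mu_{r+1}\circ\cdots\circ\mu_{2r}(\widetilde{B}) = \left(\begin{array}{rr} A & C \\ -C & 0\end{array}\right)$. The off-diagonal blocks flip sign because each $\mu_{\al+r}$ reverses the signs of its row and column, while the top-left block $A$ is preserved because the only correction to $\widetilde{B}_{jk}$ for $j,k\in I_r$ is
$$
\sum_{i=1}^r \sgn(-C_{j,i})[-C_{j,i}\,C_{i,k}]_+ = -\sum_{i=1}^r \sgn(C_{j,i})[-C_{j,i}\,C_{i,k}]_+,
$$
which vanishes by the same diagonal/off-diagonal sign computation established in the proof of the previous lemma. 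Specializing to $A=0$ gives $B'=\left(\begin{array}{rr} 0 & C \\ -C & 0 \end{array}\right)$, as required.

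The only bookkeeping hurdle is verifying that this sign cancellation really survives after swapping the two diagonal blocks; however, since the summation structure is identical up to an overall sign, the earlier computation applies verbatim and no genuinely new identity needs to be proved, so the argument closes.
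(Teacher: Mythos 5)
Your proposal is correct and takes essentially the same route as the paper, which simply states that the proof is ``virtually identical'' to that of the preceding lemma, read backwards in the $Q$-system; you have filled in exactly the details that deferral relies on (the sign bookkeeping $B_{j,\al+r}=-C_{j\al}$, the collapse of the exchange products to $\tQ_{\al,2k}^{2}$ and $\prod_{\beta\sim\al}\tQ_{\beta,2k}$, and the vanishing of the correction to the untouched diagonal block). The overall-sign observation reducing the exchange-matrix computation to the one already done in the companion lemma is exactly the intended argument.
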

The proof is virtually identical to the one above, except that we are
moving ``backwards'' in the $Q$-system, defining $Q_{\al,2k-1}$ from
$Q_{\al,2k+1}$ and $\{Q_{\beta,2k}\}_\beta$ using the $Q$-system.

From the node $k'$, we may now proceed acting with the mutations
$\{\mu_{r+1},...,\mu_{2r}\}$, taken to be distinct and in any
order. The result of applying all $r$ mutations of this type is the
unique node $k+1$, and a similar analysis shows that the seed corresponding
to node $k+1$ has the form $(\bx,B)_{k+1}$ where $B$ is the same as in
equation \eqref{Bmatrix}, and $\bx$ has the same form as $\bx$ at $k$,
but with $k+1$ substituted for $k$. 

In general, we refer to acting ``to the left'' and ``to the right'',
depending on whether we are starting from node $k$ or $k'$. Acting to
the right from a node $k$ means acting with mutations $\mu_i$ with
$i\leq r$, and acting to the left means acting with $\mu_i$ with
$i>r$. Acting to the right on a node $k'$ means acting with mutations
$\mu_i$ with $i>r$, and acting to the left on node $k'$ means acting
with mutations $\mu_i$ with $i\leq r$. 

We can continue to construct this graph in both directions,
encountering only the two types of nodes corresponding to each integer
$k$ after applying the relevant set of $r$ mutations.

Thus we can define the graphs corresponding to the
evolution of the $Q$-systems \eqref{Qsys} and \eqref{Qsystem}. 

\begin{defn}
The graph $\mathbb T_\g$ is obtained from $\mathbb T_{2r}$ by starting
with any node $k\in \Z$, with the matrix $B$ as in \eqref{Bmatrix}, and
considering edges corresponding to the distinct mutations, $1,...,r$, taken
in any order, identifying nodes with the same seeds. 

The node reached after acting once with each mutation in this set 
is the node $k'$. The graph is extended to the right by then acting
with the mutations $\mu_{r+1},...,\mu_{2r}$ to reach node $k+1$, and
so forth.

The graph is extended from node $k$ to the left by acting with the
mutations $r+1,...,2r$, to reach node $(k-1)'$. The graph is extended
to the left from this node by
considering the evolutions $\mu_{1},...,\mu_{r}$ to reach node
$(k-1)$, etc.
\end{defn}
This graph as a chain of hypercubes, infinite in both directions, as in
figure \ref{graph}. By contrast, the
graph corresponding to the original $Q$-system \eqref{Qsystem} is
semi-infinite, with the cutoff achieved by setting initial conditions
at $k=0$.

\begin{defn}
The graph $\overline{\mathbb T}_\g\subset \mathbb T_\g$ corresponding
to the $Q$-system \eqref{Qsystem} is the subgraph of $\mathbb T_\g$
with a specialized seed at $k=0$, where we set $Q_{\al,0}=1$. The
subgraph consists of the node $k=0$ and the nodes to its
``right''. That is, first by the set of mutations $\mu_1,...,\mu_r$,
then the mutations $\mu_{r+1},...,\mu_{2r}$, and alternating these.
\end{defn}
This can be considered as a semi-infinite chain of mutations. See
figure \ref{graph2} for the example of $\sl_3$.
\begin{figure}[ht]
\begin{center}
\includegraphics[width=10cm]{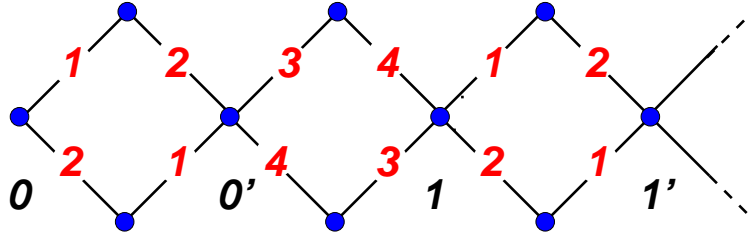} 
\end{center}
\caption{The subgraph $\overline{\mathbb T}_{\sl_3}$.}
\label{graph2}
\end{figure}

The initial condition at $k=0$ implies, in particular, that
$Q_{\al,-1}=0$. 

In fact, all cluster variables $\{\widetilde{Q}_{\al,i}\}_{i\in \Z}$
are found by considering just the nodes $k$ with $k\in \Z$, and
ignoring the fine structure which is given by the Cartan matrix $C$
(in fact, by its rank).
The situation is similar to the ``bipartite belt'' considered by Fomin
and Zelevinsky in the context of $Y$-systems (which are of course
closely connected to $Q$-systems). One may define the compound
mutations
\begin{eqnarray*}
\mu_+&=&\mu_r\circ \cdots \circ \mu_1 \\
\mu_-&=&\mu_{2r}\circ \cdots \circ \mu_{r+1} \\
\end{eqnarray*}
and consider the chain, obtained by acting on the seed at node $k$
with $\mu_+$ and $\mu_-$ alternatively. We have
$(\mu_+)^2=(\mu_-)^2=1$. Acting on the seed at node $k$ with
$\mu_+$ brings us to node $k'$, and acting with $\mu_-$ brings us to
node $(k-1)'$. Similarly acting on node $k'$ with $\mu_-$ brings us to
node $k+1$. The result is a one-dimensional chain with two types of
nodes, see figure \ref{chain}. 
\begin{figure}[ht]
\begin{center}
\includegraphics[width=10cm]{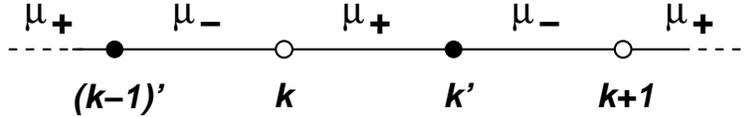} 
\end{center}
\caption{The one-dimensional chain containing all cluster variables.}
\label{chain}
\end{figure}
All cluster variables are contained in the unprimed nodes $k\in
\Z$, or in the primed nodes $k'$.

The cluster variables which give the $Q$-system are contained in the
subgraph with $k\geq 0$, with the cluster variables at $k=0$ given by
$Q_{\al,0}=1$ and $Q_{\al,1}=t_\al$. This is a singular point, because
the evolution $\mu_{\al+r}$ with $\al\in I_r$ is not invertible at
this point, since $\mu_{\al+r}$ has a non-trivial kernel when acting on
$(\bx)_0$. 

\begin{remark}\label{extension}
It is actually possible to make sense of the $Q$-system in the limit
$Q_{\al,0}\to 1$, thus obtaining an infinite chain (rather than a
semi-infinite one), with cluster variables at points $k<0$ which are
$0$ at a finite number of nodes, and are related by a sign to cluster
variables at nodes with $k>0$. We will give details of this extension in a
future publication.
\end{remark}

\subsection{Strong Laurent phenomenon}
One of the most remarkable properties of a cluster algebras, given the
rational nature of the mutations, is the {\em Laurent phenomenon}
\cite{FZlaurent}. This is a theorem which states that any cluster
variable is a Laurent polynomial in the other cluster variables, with
coefficients in the group ring $\Z\mathbb P$, where $\mathbb P$ is the
coefficient ring (the integers in our case).

This is to be compared with the polynomiality property, for the
$Q$-system \eqref{Qsystem} which is explained in Lemma
\ref{polynomiality}. Thus, a corollary of \ref{polynomiality} is the
following theorem.
\begin{thm}\label{poly}
For the cluster algebra corresponding to the $Q$-system above, in
the limit $Q_{\al,0}\to 1$ (that is, $\widetilde{Q}_{\al,0}$ the
appropriate root of unity, see Lemma \ref{renormalize}, the cluster
variables at any node $k>0$ are polynomials in the cluster
variables of the node $k=0$.
\end{thm}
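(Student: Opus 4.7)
My plan is to combine two results that are already at our disposal: the Laurent phenomenon of \cite{FZlaurent} for cluster algebras, and Lemma \ref{polynomiality}, which expresses the representation-theoretic polynomiality of $Q$-system solutions.

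First I would apply the Laurent phenomenon to the cluster algebra structure on $\overline{\mathbb T}_\g$ built in Section \ref{Qcluster}. Starting from the seed $(\bx,B)_0$ with $\bx_0 = (\tQ_{1,0},\ldots,\tQ_{r,0},\tQ_{1,1},\ldots,\tQ_{r,1})$ and mutation matrix $B$ given by \eqref{Bmatrix}, every cluster variable reached after any finite sequence of mutations lies in $\Z[\tQ_{\beta,0}^{\pm 1},\tQ_{\beta,1}^{\pm 1}\,:\,\beta\in I_r]$. In particular, for each $k>0$, the cluster variables $\tQ_{\al,2k}$ and $\tQ_{\al,2k+1}$ are Laurent polynomials with integer coefficients in the entries of $\bx_0$.

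Next I would identify these cluster variables, after the specialization $\tQ_{\beta,0}\to \epsilon_\beta$ (equivalently $Q_{\beta,0}\to 1$), with solutions of the normalized $Q$-system. By Lemmas \ref{renormalize} and \ref{firstmutation}, mutation along the alternating nodes of $\overline{\mathbb T}_\g$ reproduces the recursion \eqref{Qsys} in normalized form; iterating gives $\tQ_{\al,k}=\epsilon_\al Q_{\al,k}$, where the $Q_{\al,k}$ are the solutions of \eqref{Qsystem} with the prescribed initial data $Q_{\al,0}=1$, $Q_{\al,1}=t_\al$. Lemma \ref{polynomiality} then asserts that each $Q_{\al,k}$ is a polynomial in $\{Q_{\beta,1}\}_{\beta\in I_r}$, whence $\tQ_{\al,k}$ is a polynomial in $\{\tQ_{\beta,1}\}_{\beta\in I_r}$, which is the desired statement.

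Since both ingredients are already proven, I do not anticipate a substantial obstacle in the argument itself. The one point that deserves care is that the Laurent phenomenon \emph{a priori} allows negative powers of $\tQ_{\beta,1}$ to remain after substituting $\tQ_{\beta,0}=\epsilon_\beta$; what Lemma \ref{polynomiality} contributes is precisely that all such terms must cancel, converting the Laurent polynomial into an honest polynomial in $\{\tQ_{\beta,1}\}$. Conceptually, this shows that the representation-theoretic polynomiality sharpens the Laurent phenomenon along the locus $\{Q_{\beta,0}=1\}$, and it is this sharpening that justifies the name \emph{strong Laurent phenomenon}.
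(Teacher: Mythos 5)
Your argument is correct and takes essentially the same route as the paper, which states Theorem \ref{poly} as a direct corollary of Lemma \ref{polynomiality} via exactly the identification of the cluster variables at node $k$ with the normalized $Q$-system solutions $\widetilde{Q}_{\al,2k},\widetilde{Q}_{\al,2k+1}$ established in Section \ref{Qcluster}. Your opening appeal to the Laurent phenomenon is not actually needed for this implication (the paper invokes Laurentness only to control the extension to $k<0$), but your closing remark that representation-theoretic polynomiality sharpens the Laurent phenomenon at the locus $Q_{\al,0}=1$ is precisely the point the paper makes immediately after the theorem.
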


In fact, one can show that the same statement holds for the cluster
variables at $k<0$, corresponding to the extended $Q$-system, see
remark \ref{extension}. Most intriguingly, it appears that the
phenomenon generalizes to other ``branches'' of the tree $\mathbb
T_n$, although this is certainly less than obvious from the
$Q$-system!

This theorem is, in some sense, a stronger statement than the Laurent
phonemonon, because we have no negative powers of $Q_{\al,1}$ in
$Q_{\beta,k}$. However, it says nothing about the dependence on other
cluster variables at other nodes.

We note that it is precisely the Laurent phenomenon theorem that
allows us to prove that the extension of the $Q$-system
\eqref{Qsystem} to negative values of $k$ is well-defined in the limit
$Q_{\al,0}\to 1$, because there can be no pole in the cluster
variables at this point.

\section{Conclusions}
In this note, we have shown that it is possible to formulate the
$Q$-system for simply-laced Lie algebras as a cluster algebra, and
discussed the related graph structure. This gives a strong constraint
on the cluster variables in the limit where $Q_{\al,0}\to 1$,
according to Theorem \ref{poly}. This leaves many open problems.

The graph structure is more interesting, and includes more information
about the Cartan matrix $C$ (other than just its rank) if we allow all
possible evolutions of the $Q$-system from any node in $\mathbb
T_{2r}$. The simplest case where this comes into play is when the rank
is 3 or greater. We have not discussed this structure in these notes.

The formulation of $Q$-systems for the non-simply laced cases is more
complicated and the definitions must be generalized to allow for such
systems. 

In general, it is more natural to consider systems with coefficients,
allowing us to consider the un-normalized $Q$-systems, which contain
minus signs. 

Moreover, we have defined in \cite{DFK} a deformed $Q$-system which
contains more free parameters. We do not yet know whether these can be
considered as cluster algebras, but they are interesting as they have
an alternative formulation for their evolution involving substitution
rather than a rational mutation map.

These and other extensions will be addressed in a future publication.

\vskip.15in

\noindent{\bf Acknowledgements:} The author thanks Ph. Di Francesco,
D. Hernandez, B. Keller, N. Reshetikhin and A. Zelevinsky for their valuable
input, and SPhT at CEA-Saclay
for their hospitality. This research was supported by NSF grant
DMS-05-00759.


\bibliographystyle{plain}

\def\cprime{$'$} \def\cprime{$'$} \def\cprime{$'$} \def\cprime{$'$}
  \def\cprime{$'$} \def\cprime{$'$} \def\cprime{$'$} \def\cprime{$'$}
  \def\cprime{$'$} \def\cprime{$'$}

\end{document}